\numberwithin{equation}{section} \hyphenation{semi-stable}
\newcommand{\CC}{\mathbb{C}}
\newcommand{\ZZ}{\mathbb{Z}} 
\newcommand {\PP}{\mathbb{P}}
\newcommand {\sI}{\mathcal{I}}
\newcommand {\sO}{\mathcal{O}}
 \DeclareMathOperator{\Proj}{Proj}
 \def\cocoa{{\hbox{\rm C\kern-.13em
      o\kern-.07em C\kern-.13em o\kern-.15em A}}}
\newtheorem{theorem}{Theorem}[section]
\newtheorem{lemma}[theorem]{Lemma}
\newtheorem{proposition}[theorem]{Proposition}
 \theoremstyle{definition}
\newtheorem{definition}[theorem]{Definition} \theoremstyle{remark}
\newtheorem{remark}[theorem]{Remark}
\newtheorem{example}[theorem]{Example}
\definecolor{MyDarkGreen}{cmyk}{0.7,0,1,0}
\begin{document}

\title[Configurations of eigenpoints]
{Configurations of eigenpoints}
 \author[V. Beorchia]{Valentina Beorchia} 
 \address{Dipartimento di Matematica e  Geoscienze, Universit\`a di
Trieste, Via Valerio 12/1, 34127 Trieste, Italy}
  \email{beorchia@units.it, 
  ORCID 0000-0003-3681-9045}.
  \author[R.\ M.\ Mir\'o-Roig]{Rosa M.\ Mir\'o-Roig} 
  \address{Facultat de
  Matem\`atiques i Inform\`atica, Universitat de Barcelona, Gran Via des les
  Corts Catalanes 585, 08007 Barcelona, Spain} \email{miro@ub.edu, ORCID 0000-0003-1375-6547}

\thanks{The first author is a member of GNSAGA of INdAM and is supported by the fund Universit\`a degli Studi di Trieste - FRA 2022, by the MIUR Excellence Department Project awarded to the DMG of Trieste, 2018-2023, and by the Italian MIUR funds, PRIN project Moduli Theory and Birational Classification (2017).} 
\thanks{The second author has been partially supported by the grant PID2019-104844GB-I00}

\keywords{Tensor eigenvector, determinantal variety, complete intersection, surface containing a line.}
\subjclass[2020]{Primary 15A18, 14M12; Secondary 15A69}

\begin{abstract} This note is motivated by the Question 16
of http://cubics.wikidot.com (see also \cite{RS}): Which configurations of 15 points in $\PP^3$ arise as eigenpoints of a cubic surface? We prove that a general eigenscheme in $\PP^n$ is the complete intersection of two suitable smooth determinantal curves on a smooth determinantal surface. Moreover, we prove that the converse result holds if $n=3$, providing an answer in any degree to the cited question. Finally, we show that any general set of points in $\PP^3$ can be enlarged to an eigenscheme of a partially symmetric tensor.
\end{abstract}

\maketitle

\section{Introduction}

Tensor spectral theory fits in a natural way in the context of Algebraic Geometry, and it has recently received a lot of  attention, also because of the extended range of applications, like hypergraphs theory, quantum physics, dynamical systems, polynomial optimization, signal processing and medical imaging among others (see, for instance, \cite{QCC}). In this article we focus on eigenvectors of tensors, which were introduced independently in \cite{Lim} and \cite{Qi}. The definition relies on the choice of a nondegenerate quadratic form $Q$ on $\CC^{n+1}$, or equivalently on the choice of an isomorphism $\CC^{n+1} \cong {\CC^{n+1}}^\vee$. With such a choice, given $T \in ({\CC^{n+1}} ^\vee)^{\otimes d}$, a non zero vector $v \in \CC^{n+1}$ is said to be an {\em eigenvector} for $T$ if the contraction $T(v ^{\otimes d-1})\in {\CC^{n+1}}^\vee$ is equal to $\lambda v$ for some $\lambda \in \CC$ under the isomorphism ${\CC^{n+1}}^\vee \cong \CC^{n+1}$.
Since the property of being an eigenvector
is preserved under scalar multiplication, it is natural to regard eigenvectors as points in $\PP^n$, in which case we shall call them {\em eigenpoints}. By choosing an orthonomal basis with respect to $Q$ and coordinates $T=(t_{i_1i_2\cdots i_d})$, the eigenpoint
condition can be expressed with the algebraic equations 
\begin{equation}\label{eq: general equations}
\sum _{i_2 \dots i_d =0}^n t_{j i_2 \dots i_d} x_{i_2} \dots x_{i_d} = \lambda x_j,\  {\text for} \ j =0 \dots n.
\end{equation}

The complete set of eigenpoints with the natural scheme structure arising from its equations will be called an eigenscheme. 

There are several natural problems related to eigenschemes: their dimension, their degree and their configurations. Concerning their dimension, Abo proved in
\cite{Abo} that the discriminant locus of tensors in $\PP(({\CC^{n+1}} ^\vee)^{\otimes d})$ with positive-dimensional or non-reduced eigenscheme is a hypersurface, and he computed its degree. So the general tensor has a $0$-dimensional eigenscheme, and its degree has been determined in \cite{CS}, see also Lemma \ref{lemma: prelim}.

Regarding the geometry of their configurations, the planar case has been settled in \cite{ASS} for $d=3$ and in \cite {BGV} for $d \ge 4$. It turns out that in the planar case the eigenschemes of general tensors are characterized geometrically by the property of
being the base locus of a net of degree $d$ curves, and of having no aligned subschemes of length $d+1$, nor length $kd$ subschemes on a curve of degree $k$ for any $2 \le k \le d-1$.

Finally, in the case $n=3=d$, some partial results are given in \cite{CGKS}, and for $n \ge 3$, a characterization of the sets of determinantal equations is given in \cite {BGV2022}. As far as we know, a general description of the geometry of eigenpoints in the projective space is widely open.

This note is motivated by Question 16
of the website http://cubics.wikidot.com (see also \cite{RS}): Which configurations of 15 points in $\PP^3$ arise as eigenpoints of a cubic surface?
We give a geometric descriptions of eigenpoints configurations in $\PP^n$. Our main results are the following: a general eigenscheme in $\PP^n$ is the complete intersection of two smooth determinantal curves on a smooth determinantal surface, see Theorem \ref{geo_descr}. We prove that the converse result holds if $n=3$ (see Theorem \ref{converse}), providing an answer to the cited question. Finally, in Proposition \ref{degw}, we show that any general set of points in $\PP^3$ can be enlarged to an eigenscheme of a partially symmetric tensor.

Our approach relies on the fact that eigenschemes of general tensors turn out to be standard determinantal subschemes, see Definition \ref{def: eigenscheme}. By deleting any column of the defining matrix, the maximal minors of the resulting matrix define a smooth irreducible standard determinantal curve, and deleting two columns we similarly find a smooth standard determinantal surface. Such a surface is not general, since it contains a line. By using the results of \cite{KMR} and \cite{G} it is possible to determine the class of the curves on the surface, and to obtain the first result.

The proof of the converse in $\PP^3$ relies on results of \cite{L} on the Picard group of a general surface containing a line and on the use of the Mapping cone process.

Finally, some similar ideas allow to proof
Proposition \ref{degw}.

\vskip 2mm

\noindent {\bf Acknowledgement}. Most of this work was done while the second author was a guest of the University of
Trieste, and she would like to thank the people of  the Dipartimento di Matematica e Geoscienze for their warm hospitality.

\vskip 2mm

\noindent {\bf Notation. }Throughout this note,  $R=\CC[x_0,x_1,\cdots , x_n]$ and $\PP^n=\Proj(R)$. Given a homogeneous matrix $ M$, i.e. a matrix representing a degree 0 morphism $\Phi $
of free graded $R$-modules, we denote by $I_t(M)$  the ideal of $R$ generated by the $t\times t$
 minors of $M$.

\section{Preliminaries}

This section contains the basic definitions and results on eigenschemes and it lays the groundwork for the results in the later sections.

As observed in \cite[Section 1]{ASS}, the eigenscheme equations \eqref{eq: general equations} involve $n+1$ homogeneous polynomials of degree $d-1$, so that it is not restrictive to assume that $T$ is a
{\it partially symmetric tensors}, that is for any fixed $j\in \{0, \dots, n \}$, the {\it slice} $(t_{j i_2 \dots i_d})$ is a symmetric tensor of order $d-1$.
Indeed, it is clear from \eqref{eq: general equations} that any set of eigenvectors can be realized as the set of eigenvectors of a partially symmetric tensor $T\in (Sym^{d-1}\CC^{n+1}) \otimes \CC^{n+1}$. Finally, we can identify $T$ with a $(n+1)$-uple $(g_0,\cdots ,g_n)$ of homogeneous polynomials of degree $d-1$. With such identifications, we immediately see that the equations  \eqref{eq: general equations} are of determinantal type, and we are in position to give the explicit definition of eigenscheme. We first recall the following notion.

\begin{definition} A codimension $c$ subscheme $X\subset \PP^n$ is called a {\em standard determinantal
scheme} if the saturated homogeneous ideal $I(X) = I_t(A)$ for some $t\times (t+c-1)$ homogeneous matrix $A$.
\end{definition}

\begin{definition} \label{def: eigenscheme}
Let $T=(g_0,\cdots ,g_n)\in (Sym^{d-1}\CC^{n+1})^{\oplus (n+1)}$ be a partially symmetric tensor. The eigenscheme of $T$ is the closed subscheme $E(T)\subset \PP^n$ defined by the $2\times 2$ minors of
\begin{equation}\label{def_matrix}
M=\begin{pmatrix} x_0 & x_1 & \cdots & x_n \\
g_0 & g_1 & \cdots & g_n
\end{pmatrix};
\end{equation}
i.e. $I(E(T))=I_2(M)$. If $T$ is general, then $E(T)$ is $0$-dimensional (see, for instance, \cite{Abo}).
Moreover, by the Hochster-Eagon Theorem \cite{HE}, the quotient $R/I(E(T))$ is a Cohen-Macaulay ring, and as a consequence, $I(E(T))$ is saturated. Hence $E(T)$ is a standard determinantal scheme.
When $T$ is symmetric, i.e., there is a homogeneous polynomial $f$ and $g_i=\partial _if$ we denote its eigenscheme by $E(f)$.
\end{definition}

The eigenschemes arise also in the following framework:  if we consider the rational map 
$$
\mu:\PP^n \dashrightarrow \PP^n, \quad p\mapsto \mu (p)=(g_0(p),g_1(p),\cdots ,g_n(p)),
$$ 
then $p\in \PP^n$ is an eigenpoint if and only if either $p$ is a fixed point of $\mu $ or $\mu $ is not defined at $p$. Therefore, in the symmetric case, the eigenscheme $E(f)$ consists of the Jacobian scheme and the fixed points of the polar map $\nabla f=(\partial_0f, \cdots ,\partial _nf)$. Basic questions concerning the degree, the dimension and the minimal free resolution of eigenschemes $E(T)$ of a general symmetric tensor  $T=(g_0,\cdots ,g_n)\in (Sym^{d-1}\CC^{n+1})^{\oplus (n+1)}$ are well known and we recall them in next proposition for seek of completeness. In next section, we will use the geometry of arithmetically Cohen-Macaulay curves and surfaces naturally associated with eigenschemes to gain some knowledge about the configuration of eigenschemes.

\begin{lemma} 
\label{lemma: prelim}
Fix integers $n\ge 2$ and $d\ge 2$.
 Let $T=(g_0,\cdots ,g_n)\in (Sym^{d-1}\CC^{n+1})^{\oplus (n+1)}$ be a general partially symmetric tensor. It holds:
 \begin{itemize}
     \item[(1)] $E(T)$ is a reduced 0-dimensional scheme of length 
     $$
     \frac{(d-1)^{n+1}-1}{d-2}.
     $$
     \item[(2)] The homogeneous ideal $I(E(T))\subset R$ has a minimal free $R$-resolution
     $$ 0 \longrightarrow \oplus _{i=0}^{n-1}R(-(i+1)d-n+2i+1) \longrightarrow   \cdots \longrightarrow $$
     $$R(-1-d)^{\binom {n+1} {3}}\oplus R(1-2d)^{\binom {n+1}{3}}
     \longrightarrow R(-d)^{\binom {n+1}{2}}\longrightarrow I(E(T)) \longrightarrow 0.
     $$
     \end{itemize}
\end{lemma}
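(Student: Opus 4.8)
The plan is to handle the two items with the two standard tools attached to the maximal minors of a two-rowed matrix: the Eagon--Northcott complex for the resolution in (2), and a graph-versus-diagonal intersection count for the length in (1). First I fix the grading. Writing $F=R(-1)^{n+1}$ and $G=R\oplus R(d-2)$, the matrix $M$ of \eqref{def_matrix} represents a degree $0$ map $\phi\colon F\to G$ whose first row is the linear forms $x_j$ and whose second row is the degree $d-1$ forms $g_j$; by definition $I(E(T))=I_2(M)=I_2(\phi)$ is the ideal of maximal minors, generated by the $\binom{n+1}{2}$ forms $x_ig_j-x_jg_i$ of degree $d$.

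For part (2), since $T$ is general, $E(T)$ has codimension $n$ by \cite{Abo}, which is exactly the maximal codimension $(n+1)-2+1=n$ for the $2\times 2$ minors of a $2\times(n+1)$ matrix. Hence the grade of $I_2(\phi)$ is maximal and the Eagon--Northcott complex $\mathcal{EN}(\phi)$ is a free resolution of $R/I_2(\phi)$, with terms
$$\mathcal{EN}_0=R,\qquad \mathcal{EN}_i=\textstyle\bigwedge^{i+1}F\otimes (\mathrm{Sym}_{i-1}G)^\vee\otimes \bigwedge^2 G^\vee\quad(1\le i\le n).$$
It then remains to compute these graded modules. From $\bigwedge^2 G=R(d-2)$ one gets $\bigwedge^2 G^\vee=R(2-d)$; one has $\bigwedge^{i+1}F=R(-i-1)^{\binom{n+1}{i+1}}$ and $(\mathrm{Sym}_{i-1}G)^\vee=\bigoplus_{b=0}^{i-1}R\big(b(2-d)\big)$ (symmetric and divided powers agreeing over $\CC$). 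A direct multiplication of twists then gives $\mathcal{EN}_1=R(-d)^{\binom{n+1}{2}}$, $\mathcal{EN}_2=R(-1-d)^{\binom{n+1}{3}}\oplus R(1-2d)^{\binom{n+1}{3}}$, and in general the last term $\mathcal{EN}_n=\bigoplus_{i=0}^{n-1}R\big(-(i+1)d-n+2i+1\big)$, which reproduces the displayed resolution of $I(E(T))$ after discarding $\mathcal{EN}_0=R$. Minimality is automatic, since every entry of the differentials is, up to sign, an entry or a minor of $\phi$ and hence lies in the maximal ideal.

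For part (1), reducedness and $0$-dimensionality of $E(T)$ for general $T$ are furnished by \cite{Abo}: the tensors with non-reduced or positive-dimensional eigenscheme form a proper (hypersurface) locus. For the length I would realize $E(T)$ as a fixed-point scheme. For general $T$ the forms $g_0,\dots,g_n$ have no common zero, so $\mu\colon\PP^n\to\PP^n$, $p\mapsto(g_0(p):\cdots:g_n(p))$, is a morphism with $\mu^\ast\sO(1)=\sO(d-1)$, and $E(T)$ is exactly the fixed locus $\Gamma_\mu\cap\Delta\subset\PP^n\times\PP^n$. In the Chow ring $\ZZ[a,b]/(a^{n+1},b^{n+1})$, with $a,b$ the hyperplane classes of the two factors, one has $[\Delta]=\sum_{i=0}^n a^i b^{n-i}$, while restricting $a,b$ to $\Gamma_\mu\cong\PP^n$ as $h$ and $(d-1)h$ gives $[\Gamma_\mu]=\sum_{i=0}^n (d-1)^i a^i b^{n-i}$. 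Multiplying and reading off the coefficient of $a^n b^n$ yields
$$\deg E(T)=[\Gamma_\mu]\cdot[\Delta]=\sum_{i=0}^{n}(d-1)^i=\frac{(d-1)^{n+1}-1}{d-2},$$
the intersection being proper of dimension zero, so that the intersection number equals the length (this also agrees with \cite{CS}). Alternatively the same constant is the eventual value of the Hilbert function of $R/I(E(T))$, extracted from the resolution in (2).

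The genuinely delicate points are the two genericity inputs and the bookkeeping of twists. Checking that the Eagon--Northcott terms reproduce exactly the shifts $-(i+1)d-n+2i+1$ is a careful but routine computation with exterior and symmetric powers of graded free modules, and could alternatively be phrased through the Thom--Porteous/Giambelli formalism. The substantive step is guaranteeing that the numerics give the honest length: this requires $E(T)$ to be $0$-dimensional and reduced, so that $\Gamma_\mu\cap\Delta$ is a proper intersection whose degree counts distinct eigenpoints, which is precisely where the generality of $T$, via \cite{Abo}, enters.
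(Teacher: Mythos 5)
Your proposal is correct. For part (2) you take exactly the paper's route --- the paper simply observes that $E(T)$ is standard determinantal of codimension $n$ and invokes the Eagon--Northcott complex, citing \cite{E} --- but you carry out the grading bookkeeping explicitly, and your twists check out: with $F=R(-1)^{n+1}$, $G=R\oplus R(d-2)$ one gets $\mathcal{EN}_i=\bigoplus_{b=0}^{i-1}R(-(i+1)+(b+1)(2-d))^{\binom{n+1}{i+1}}$, which reproduces $R(-d)^{\binom{n+1}{2}}$, $R(-1-d)^{\binom{n+1}{3}}\oplus R(1-2d)^{\binom{n+1}{3}}$ and the last term $\bigoplus_{i=0}^{n-1}R(-(i+1)d-n+2i+1)$; minimality is as you say. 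For part (1) the paper gives no argument at all (it defers entirely to \cite{CS}), whereas you supply a self-contained proof: for general $T$ the $g_i$ have no common zero, so $E(T)$ is scheme-theoretically $\Gamma_\mu\cap\Delta$, and the Lefschetz-type count $\sum_{i=0}^n(d-1)^i$ in $A^*(\PP^n\times\PP^n)$ gives the length, with reducedness and properness of the intersection coming from \cite{Abo}. This buys a transparent geometric explanation of the formula (and of why generality is needed), at the cost of the extra genericity check that $V(g_0,\dots,g_n)=\emptyset$; the citation route is shorter but opaque. Both are sound; your version is the more informative one.
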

\begin{proof}
(1) See \cite{CS}.
%, for instance,  \cite[Lemma 2.2]{BGV}.

(2) Since $E(T)$ is a standard determinantal scheme of codimension $n$, its minimal free resolution is given by the Eagon-Northcott complex (see \cite[Theorem A2.10]{E}).
\end{proof}

It is worthwhile to point out that for any 
partially symmetric tensor $T=(g_0,\cdots ,g_n)\in (Sym^{d-1}\CC^{n+1})^{\oplus (n+1)}$ the associated eigenscheme $E(T)$ contains no $d+1$ points on a line and, more general, no $sd+1$ points on a curve of degree $s$. Indeed,  $E(T)$
is a 0-dimensional scheme whose homogeneous ideal is generated by forms of degree $d$. If $E(T)$ contains $sd+1$ points on a curve $C$ of degree $s$ then, by Bezout's theorem, the whole curve $C$ will be contained in the base locus contradicting the fact that $E(T)$ is 0-dimensional.

\vskip 2mm We will end this preliminary section with a concrete example.

\begin{example} \label{ex: Fermat}
\rm
We consider the Fermat cubic $f=x_0^3+x_1^3+x_2^3+x_3^3\in \CC [x_0,x_1,x_2,x_2]$. The eigenscheme $E(f)$ is the 0-dimensional subscheme of $\PP^3$ of length 15 defined by the maximal minors of 
$$
M=\begin{pmatrix} x_0 & x_1 & x_2 & x_3 \\
x^2_0 & x^2_1 & x_2^2 & x_3^2
\end{pmatrix}.
$$
Therefore, $E(f)=\{(1,0,0,0),(0,1,0,0),(0,0,1,0),(0,0,0,1),(1,1,0,0),(1,0,1,0),(1,0,0,1),  \\
(0,1,1,0),(0,1,0,1),(0,0,1,1),(1,1,1,0),(1,1,0,1),(1,0,1,1),(0,1,1,1),(1,1,1,1)\}$
\end{example}
%%%%%%%%%%%%%%%%%%%%%%%%%%%%%%%%%%%%%%%%%%%%%%%%%%%%%%%%%%%%%

\section{Eigenschemes of partially symmetric tensors}
In this section, we deal with partially symmetric tensors and we address the open problem of determining all possible subschemes of $\PP^n$ arising as their eigenschemes. We will be able to give for any $n\ge 3$ a necessary condition which turns out to be sufficient when $n=3$.
\begin{remark}\label{curves_and_surfaces}
Let $Z\subset \PP^n$ with $n \ge 3$ be an eigenscheme of a general partially symmetric tensor $T=(g_0,\cdots ,g_n)\in (Sym^{d-1}\CC^{n+1})^{\oplus (n+1)}$, with defining matrix $M$ as in \eqref{def_matrix}.

Moreover, denote by
$$
M_i:=\begin{pmatrix} x_0 & x_1 & \cdots & \widehat {x_i} & \cdots & x_n \\
g_0 & g_1 & \cdots & \widehat {g_i} & \cdots& g_n
\end{pmatrix}
$$ 
the matrix obtained from $M$ by deleting the $i$-th column.

Then the $2\times 2$ minors of $M_i$ define a standard aCM determinantal curve $C_i\subset \PP^n$, which is smooth and irreducible if the forms $g_j$ are general.

We claim that this holds also in the symmetric case, where $g_i = \partial_i f$, if $f$ is a sufficiently general homogeneous polynomial. 

Indeed, first observe that for a general $f$, the
determinantal subscheme $C_i$ defined by the $2\times 2$ minors of
$$
\begin{pmatrix} x_0 & x_1 & \cdots & \widehat {x_i} & \cdots & x_n \\
\partial_0 f& \partial_1 f& \cdots & \widehat {\partial_i f} & \cdots& \partial_n f
\end{pmatrix}
$$
has pure dimension $1$, hence it is of the maximal codimension $n-1$. To verify this statement, note that such a property is satisfied by an open subscheme $U$ of the space of all partially symmetric tensors. The symmetric tensors form an irreducible closed subvariety, so in order to prove the nonemptyness of the intersection of $U$ with the symmetric locus, it suffices to exhibit one such example. This is given, for instance, by the Fermat polynomial
$$
f=x_0^d + \dots + x_n^d,
$$
where the defining matrix of the eigenscheme is of the type of the Example 
\ref{ex: Fermat}. It is not difficult to see that by deleting a column, the 
locus $C_i$ a set of lines, all passing through one of the
fundamental points.

Next we observe that, if $f$ is general, we have that $C_i$
is smooth. Indeed, the singular locus is the common zero locus of the matrix entries,
and this is, in general, empty.

As a consequence, the ideal sheaf of such a subscheme is resolved by the Eagon-Northcott complex and, hence,  $C_i$ is an aCM subscheme. In particular, we have $H^1(\sI_{C_i})=0$. Therefore $H^0(\sO_{C_i})=\CC$. This implies that $C_i$ is connected and being smooth we get that $C_i$ is irreducible.

Now we turn again to the general case of partially symmetric tensors, and we determine the degree of $C_i$.
Since the section of $C_i$ with the hyperplane $x_i=0$ is, by construction, a general 0-dimensional eigenscheme in 
$\PP^{n-1}$, its degree is (Lemma \ref{lemma: prelim}(1))
$$
{\rm deg} \ C_i=\frac{(d-1)^n -1}{ d-2}.
$$
Similarly, for any pair of indices $i \neq j$, we can consider the matrix
$$
M_{ij}:=\begin{pmatrix} x_0 & x_1 & \cdots & \widehat {x_i} & \cdots &  \widehat {x_j} & \cdots &  x_n \\
g_0 & g_1 & \cdots & \widehat {g_i} & \cdots& \widehat {g_j} & \cdots & g_n\\
\end{pmatrix}.
$$ 
Then the $2\times 2$ minors of $M_{ij}$ define a standard determinantal surface $S_{ij}\subset \PP^n$, which is smooth and irreducible if the forms $g_k$ are general.

Again, by similar arguments, we see that this holds also in the symmetric case, if $f$ is a sufficiently general homogeneous polynomial.

Since the section of $S_{ij}$ with the codimension $2$ linear subspace $x_i=x_j=0$ is by construction a general eigenscheme in 
$\PP^{n-2}$, its degree is  (Lemma \ref{lemma: prelim}(1))
$$
{\rm deg} \ S_{ij}=\frac{(d-1)^{n-1} -1} {d-2}.
$$

\end{remark}

We are now in the position to state and prove our first result.
\begin{theorem}\label{geo_descr}
Let $Z\subset \PP^n$ with $n \ge 3$ be an eigenscheme of a general partially symmetric, or general symmetric, tensor of order $d \ge 3$, and let $i\neq j$, $i,j \in \{ 0,\dots,n\}$ be arbitrary.

Then $Z$ is the complete intersection of the two aCM curves
$$
Z=C_i \cap C_j
$$
on the smooth surface $S_{ij}$.
\end{theorem}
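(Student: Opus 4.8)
The plan is to realize $Z$ as the scheme-theoretic intersection of the two effective Cartier divisors $C_i$ and $C_j$ on the smooth surface $S_{ij}$, and then to force equality with $Z$ by a length count. Write $p_{ab}=x_ag_b-x_bg_a$ for the $2\times 2$ minor of $M$ on columns $a,b$, so that $I(Z)=I_2(M)=(p_{ab})_{a<b}$, while $I(C_i)=I_2(M_i)=(p_{ab})_{a,b\neq i}$ and $I(C_j)=I_2(M_j)=(p_{ab})_{a,b\neq j}$. Every generator of $I_2(M_i)$ or of $I_2(M_j)$ is a generator of $I_2(M)$, so $I(C_i),I(C_j)\subseteq I(Z)$, whence $Z\subseteq C_i$ and $Z\subseteq C_j$, i.e. $Z\subseteq C_i\cap C_j$ as subschemes of $S_{ij}$. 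Since $S_{ij}$ is smooth, $C_i$ and $C_j$ are effective Cartier divisors on it, and it remains to prove that they meet properly and that $C_i\cdot C_j=\deg Z$.

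I would first settle properness. Setting $J:=I_2(M_i)+I_2(M_j)$, the only generator of $I_2(M)$ not listed among the generators of $J$ is $p_{ij}$, so $I_2(M)=J+(p_{ij})$ and $C_i\cap C_j=V(J)$ set-theoretically. The standard (Plücker-type) syzygies among the $2\times 2$ minors give, for every $k\neq i,j$, the two relations
$$ x_k\,p_{ij}=x_j\,p_{ik}-x_i\,p_{jk}, \qquad g_k\,p_{ij}=g_j\,p_{ik}-g_i\,p_{jk}, $$
whose right-hand sides lie in $J$ because $p_{ik}\in I_2(M_j)$ and $p_{jk}\in I_2(M_i)$. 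Hence at any point of $V(J)\setminus Z$, where $p_{ij}\neq 0$ while all other minors vanish, one must have $x_k=g_k=0$ for all $k\neq i,j$; such a point lies on the line $L_{ij}=\{x_k=0 : k\neq i,j\}\subset S_{ij}$ and is a common zero of the $n-1\ge 2$ forms $g_k$. For a general (partially symmetric or symmetric) tensor these forms cut out the empty set on $L_{ij}$ by the genericity already used in Remark \ref{curves_and_surfaces}, so $V(J)=Z$ as sets, $C_i$ and $C_j$ share no component, and $C_i\cap C_j$ is $0$-dimensional of length $C_i\cdot C_j$.

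The heart of the argument is the class computation on $S_{ij}$. Using the description of $\operatorname{Pic}(S_{ij})$ and of the determinantal divisor classes on a standard determinantal surface from \cite{KMR} and \cite{G}, I would identify $[C_i]$ and $[C_j]$ in terms of the hyperplane class $H$ and the class $L$ of the line $L_{ij}$. The expected outcome is $C_i\sim C_j\sim (d-1)H+L$, together with $H^2=\deg S_{ij}$, $H\cdot L=1$ and $L^2=-(d-2)$; these are consistent with Lemma \ref{lemma: prelim} and Remark \ref{curves_and_surfaces}, since $\deg C_i=C_i\cdot H=(d-1)\deg S_{ij}+1$. A direct expansion then yields
$$ C_i\cdot C_j=(d-1)^2\deg S_{ij}+2(d-1)\,H\cdot L+L^2=\frac{(d-1)^{n+1}-1}{d-2}=\deg Z. $$

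Finally, since $Z\subseteq C_i\cap C_j$ are both $0$-dimensional of the same length $\deg Z$, they coincide; and as the proper intersection of two Cartier divisors on the smooth surface $S_{ij}$, the scheme $Z=C_i\cap C_j$ is locally cut out there by two equations, hence is a complete intersection on $S_{ij}$, as claimed. I expect the one genuine obstacle to be the determination of the classes $[C_i],[C_j]$, and in particular the self-intersection $L^2=-(d-2)$ of the line — precisely the non-genericity of $S_{ij}$ noted in the introduction, which is what forces the appeal to \cite{KMR} and \cite{G}. The properness and the final length count are then routine.
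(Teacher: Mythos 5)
Your proposal is correct and follows essentially the same route as the paper: both reduce the statement to the intersection-number identity $C_i\cdot C_j=((d-1)H+L)^2=\deg Z$ on $S_{ij}$, with the class $C_i\sim (d-1)H+L$ coming from \cite{G} (via $\mathcal I_{C_i/S_{ij}}\cong\mathcal I_{L/S_{ij}}(\lambda)$ and $C_i\cdot H=\deg C_i$) and $L^2=2-d$ from adjunction with the canonical class of \cite{KMR}. The values you assert are exactly those the paper derives, and your explicit properness check via the Pl\"ucker syzygies is a useful supplement that the paper leaves implicit (it relies instead on $C_0,C_1$ being distinct irreducible curves); the one point to add is that for $n=d=3$ the surface $S_{ij}$ is a cubic with ${\rm Pic}(S_{ij})\cong\ZZ^7$, so the rank-two Picard description fails there, although Gorla's theorem still yields $C_i\sim 2H+L$ and the computation goes through unchanged --- the paper treats this case separately.
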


\begin{proof}
Without loss of generality we can assume, for simplicity, that $i=0$ and $j=1$.

We assume first that
$$
n\ge 4 \text{ or } d \ge 4.
$$
By denoting with $H$ a hyperplane divisor of $S_{01}$ and with $L$ the line given by the equations
$$
L: \quad x_2=x_3= \cdots = x_n=0,
$$
by \cite [Theorem 4.1]{KMR} we have
$$
{\rm Pic} (S_{01}) \cong \ZZ^2 \cong \langle H, L \rangle.
$$
To determine the class of $C_i$ as a divisor, we 
recall the well known fact that ${\mathcal I}_{C_i/S_{01}}\cong{\mathcal I}_{L/S_{01}}(\lambda ) $ for some $\lambda \in\ZZ$, i.e. $C_i \sim L+\lambda H$ for some $\lambda \in \ZZ$ (see \cite{G}). By considering
 the relation
 $$
 C_i \cdot H= \frac{(d-1)^n -1}{d-2},
 $$
we get that 
 $$
\lambda = d-1.
$$

Next we determine the self-intersection $L^2$ of the line $L$. Observe that by 
\cite[Proposition 2.2 (iii)]{KMR} we have that the canonical class 
$$
K_{S_{01}} \sim (n-3)C+(d-n-1)H\sim (n-3)L+((n-3)(d-1)+d-n-1))H.
$$
Then by the adjunction formula we have
$$
2g(L)-2 =-2 = (K_{S_{01}} +L) \cdot L = ((nd-2d-2n+2)H+(n-2)L)) \cdot 
$$
This gives 
$$
(n-2)L^2=(n-2)(d-2), \ i.e. 
$$
$$
L^2 = 2-d.
$$
As a consequence we have
$$
C_0 \cdot C_1 = ((d-1)H + L)^2 = \frac{(d-1)^{n+1} -1}{d-2}= {\rm deg} \   (Z).
$$
Therefore $Z$ is the complete intersection of $C_0$ and $C_1$.

Finally, let us treat the case $n=d=3$. In this case, $S_{01}$ is the blow up of $\PP^2$ at 6 points and $
{\rm Pic} (S_{01}) \cong \ZZ^7 \cong \langle \ell, e_1,e_2,e_3,e_4,e_5,e_6 \rangle.
$ where $\ell$ is the pullback of a line in $\PP^2$ and $e_{i}$ are the exceptional divisors. The line $L: \quad x_2=x_3=0$ of $S_{01}$ can be identified with one of the following divisors: $e_j$, $\ell -e_{j_1}-e_{j_2}$, $2\ell -e_{j_1}-e_{j_2}-e_{j_3}-e_{j_4}-e_{j_5}$ and $H\sim 3\ell-e_1-e_2-e_3-e_4-e_5-e_6$. Applying again \cite{G} we have 
 ${\mathcal I}_{C_i/S_{01}}\cong{\mathcal I}_{L/S_{01}}(\lambda ) $ for some $\lambda \in\ZZ$, i.e. $C_i \sim L+\lambda H$ for some $\lambda \in \ZZ$ and using the fact that $C_i \cdot H =7$ we get that $\lambda =2$. The remaining part of the proof works as in the case $n\ge 4$ or $d\ge 4$.
\end{proof}

In what follows we shall see that the converse holds if $n=3$ which for the particular case $d=3$ will provide an answer to  Question 16
of the website http://cubics.wikidot.com: Which configurations of 15 points in $\PP^3$ arise as eigenpoints of a cubic surface?

\vskip 2mm
The following result clarifies the necessity of the assumptions of Theorem \ref{converse}.

\begin{proposition}
 Let $Z =E(T) \subset \PP^n$ be a $0$ - dimensional general reduced eigenscheme. Then no $(d-1)\frac{(d-1)^n-1}{d-2}$ points of $Z$ lie on a hypersurface of degree $d-1$. 
\end{proposition}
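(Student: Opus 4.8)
The plan is to exploit the numerical coincidence
$$
(d-1)\frac{(d-1)^n-1}{d-2}=\frac{(d-1)^{n+1}-1}{d-2}-1=\deg Z-1,
$$
so that, throughout the range $n\ge 3$, $d\ge 3$ of Theorem \ref{geo_descr}, the assertion is equivalent to saying that every hypersurface $W\subset\PP^n$ of degree $d-1$ omits at least two points of $Z$. Arguing by contradiction, I would assume that some degree $d-1$ hypersurface $W=V(F)$ contains at least $\deg Z-1$ of the points of $Z$. First I would rule out $Z\subseteq W$: by Lemma \ref{lemma: prelim}(2) the ideal $I(Z)$ is generated in degree $d$, hence $H^0(\sI_Z(d-1))=I(Z)_{d-1}=0$ and no degree $d-1$ hypersurface contains $Z$. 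Therefore $W$ contains exactly $\deg Z-1$ points of $Z$, missing a single point $q\in Z$.

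Next I would pass to the surface. By Theorem \ref{geo_descr}, for $i\ne j$ the scheme $Z=C_i\cap C_j$ is a complete intersection on the smooth surface $S=S_{ij}$, with $C_i\sim C_j\sim L+(d-1)H$. Exactly as above, the determinantal surface $S$ is cut out by forms of degree $d$, so $H^0(\sI_S(d-1))=0$ and $S\not\subseteq W$; hence $F$ restricts to a nonzero section of $\sO_S((d-1)H)$, cutting out a curve $D\in|(d-1)H|$. Since $Z$ is reduced, $Z\setminus\{q\}\subset W\cap S$ while $q\notin W$, so $D$ passes through every point of $Z$ except $q$.

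Now I would invoke the Cayley--Bacharach property of the $0$-dimensional complete intersection $Z=C_i\cap C_j$ on the smooth surface $S$: its dualizing sheaf is $\omega_Z\cong\sO_Z(K_S+C_i+C_j)$, and the property holds with respect to $|\mathcal N|$ whenever $K_S+C_i+C_j-\mathcal N$ is effective, i.e.\ a divisor in $|\mathcal N|$ passing through all but one point of $Z$ passes through all of them. Using the adjunction computation from the proof of Theorem \ref{geo_descr}, namely $K_S\sim(n-3)L+\big((n-3)(d-1)+d-n-1\big)H$ together with $C_i\sim C_j\sim L+(d-1)H$, one gets
$$
K_S+C_i+C_j\sim (n-1)L+n(d-2)H,
$$
so that
$$
K_S+C_i+C_j-(d-1)H\sim (n-1)L+\big((n-1)d-(2n-1)\big)H.
$$
For $n\ge 3$ and $d\ge 3$ both coefficients are positive (indeed $(n-1)d-(2n-1)\ge n-2\ge 1$), and since $L$ and $H$ are effective this class is effective. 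Hence Cayley--Bacharach applies with $\mathcal N=\sO_S((d-1)H)$, forcing $D$ to pass through $q$ as well and contradicting $q\notin W$. This rules out the remaining case of exactly $\deg Z-1$ points and finishes the argument. The del Pezzo case $n=d=3$, where ${\rm Pic}(S)\cong\ZZ^7$, is handled identically: the same class $(n-1)L+n(d-2)H=2L+H$ is a sum of the effective classes $L$ and $H$, hence effective.

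The main obstacle is the Cayley--Bacharach step: one must phrase the property correctly for a $0$-dimensional complete intersection lying on a surface rather than in $\PP^n$, identify the governing bundle as $K_S+C_i+C_j$, and verify the positivity of $K_S+C_i+C_j-(d-1)H$ from the Picard-group description of $S_{ij}$ supplied by \cite{KMR} and \cite{G}. The remaining inputs, namely the degree-$d$ generation of $I(Z)$ and $I(S)$ and the complete intersection structure $Z=C_i\cap C_j$, are already available from Lemma \ref{lemma: prelim} and Theorem \ref{geo_descr}.
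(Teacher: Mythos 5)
Your route is genuinely different from the paper's: you try to conclude directly via a Cayley--Bacharach property of the complete intersection $Z=C_i\cap C_j$ on $S=S_{ij}$, whereas the paper links the residual point $P$ to $Z'$ inside $S$, shows $P\in L$ by a mapping cone computation, and then gets a contradiction from the pencil $\langle C_0,C_1\rangle$ together with the fact that a defining matrix of an eigenscheme must have linearly independent linear forms in its first row. Your preliminary reductions are fine: the identity $(d-1)\frac{(d-1)^n-1}{d-2}=\deg Z-1$, the fact that no degree $d-1$ hypersurface contains $Z$ or $S$ (both ideals are generated in degree $d$), and hence that $W$ cuts out a divisor $D\in|(d-1)H|$ on $S$ through $Z\setminus\{q\}$.

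The gap is in the Cayley--Bacharach step. The criterion you invoke --- that CB holds for $|\mathcal N|$ whenever $K_S+C_i+C_j-\mathcal N$ is effective --- is not a correct theorem on an arbitrary surface. What the residue theorem (or the linkage argument) actually yields is: if $D\in|\mathcal N|$ contains $Z\setminus\{q\}$ and there exists a section of $\mathcal N':=K_S+C_i+C_j-\mathcal N$ \emph{not vanishing at $q$}, then $D\ni q$. Effectivity alone does not supply such a section when $q$ lies in the base locus of $|\mathcal N'|$, and that is exactly what happens here along $L$: with $\mathcal N'\sim(n-1)L+\big((n-1)d-(2n-1)\big)H$ and $L^2=2-d$, $H\cdot L=1$, one computes $\mathcal N'\cdot L=(n-1)(2-d)+(n-1)d-(2n-1)=-1<0$, so $L$ is a fixed component of $|\mathcal N'|$ and \emph{every} section of $\mathcal N'$ vanishes along $L$ (the same happens in the case $n=d=3$, where $\mathcal N'\cdot L=-2+1=-1$). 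Consequently your argument only proves that the missing point $q$ must lie on $L$ --- which is precisely the intermediate conclusion $P\in L$ that the paper reaches by the mapping cone --- and does not yet produce a contradiction. To finish along your lines you must additionally exclude $q\in L$, for instance by checking that $Z\cap L=\emptyset$ for general $T$ (a point of $L=\{x_2=\cdots=x_n=0\}$ is an eigenpoint only if $g_2,\dots,g_n$ and $x_0g_1-x_1g_0$ all vanish there, i.e.\ $n$ conditions on a line), or by repeating the argument for a second pair of indices $(i,j)$ and using that the corresponding lines have empty common intersection. Some extra input of this kind is genuinely needed: the paper itself, after establishing $P\in L$, still requires the eigenscheme-specific matrix argument to conclude.
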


\begin{proof}
Let \eqref{def_matrix} be the matrix associated with $Z$. Assume by contradiction that $Z$ admits a subscheme $Z' \subset Z$ of $(d-1)\frac{(d-1)^n-1}{d-2}$ points lying on a hypersurface 
$$
\Sigma =V(h) \subseteq \PP^n
$$
of degree $d-1$.

By using the notations of Remark \ref{curves_and_surfaces} and as in the proof of Theorem \ref{geo_descr} we denote by $H$ a hyperplane divisor of $S_{01}$ and by $L$ the line given by the equations
$$
L: \quad x_2=x_3= \cdots = x_n=0.
$$
We
set $D' = S_{01} \cap \Sigma \sim (d-1)H$. As $Z' \subset Z$ and $Z$ is a complete intersection
of two irreducible divisors $C_0, C_1 \in | (d-1)H +L|$
by Theorem
\ref{geo_descr}, we have that
$$
Z' \subseteq D' \cap C_i.
$$
Since we have
$$
D' \cdot C_i = (d-1)H \cdot C_i= (d-1) \frac {(d-1)^n-1}{ d-2} = {\rm deg} \   Z',
$$
we see that $Z' = D' \cap C_i$. Moreover, observe that residually to $Z'$ in $Z$ we get one point $P$.

Now we claim that $P \in L$. Indeed, since both $Z$ and $Z'$ are complete intersection schemes in $S_{01}$, the minimal resolutions of their  $\sI_{Z,S_{01}}$ and $\sI_{Z',S_{01}}$ in $S_{01}$ are given by the Koszul complex. More precisely, we have
$$
0 \to \sO_{S_{01}} (-2(d-1)H-2L) \to 2\sO_{S_{01}}(-(d-1)H -L) \to\sI_{Z,S_{01}} \to 0, \text{ and }
$$
$$
0 \to \sO_{S_{01}} (-2(d-1)H-L) \to \sO_{S_{01}}(-(d-1)H -L) \oplus \sO_{S_{01}}(-(d-1)H)\to\sI_{Z',S_{01}} \to 0.
$$
Since $Z'$ and $P$ are directly linked by $Z$, by applying the Mapping cone process, we get a resolution of the ideal sheaf
$\sI_{P,S_{01}}$:
$$
0 \to \sO_{S_{01}} (-(d-1)H-L) \oplus \sO_{S_{01}} (-(d-1)H-2L)\to \qquad \qquad \qquad \qquad
$$
$$
\qquad \qquad \qquad \qquad \to 2\sO_{S_{01}}(-(d-1)H -L) \oplus \sO_{S_{01}}(-L)\to\sI_{P,S_{01}} \to 0,
$$
which allows us to conclude that $P\in L$.

As a consequence, the divisor $D' + L$ contains $Z$. So, it belongs to the pencil 
$$
D' +L \in \langle C_0, C_1\rangle$$
whose base locus is $Z$. Therefore $Z$ is a complete intersection also of $C_1$ and the reducible curve $D' \cup L$.

By construction, the equations in $S_{01}$ of $D' \cup L$
are 
$$
x_i h =0, \  {\rm for} \  i=2, \dots, n+1
$$
hence the saturated homogeneous ideal of $D' \cup L$ in $\PP^n$ is generated by the $2 \times 2$ minors of the matrix
$$
\begin{pmatrix} x_2 & x_3 & \cdots &  x_n & 0 \\
g_2 & g_3 & \cdots & g_n &h
\end{pmatrix}.
$$ 
Finally, we set
$$
N:=\begin{pmatrix} x_1 & x_2 & x_3 & \cdots &  x_n & 0 \\
g_1 & g_2 & g_3 & \cdots & g_n &h
\end{pmatrix},
$$ 
and we consider $W \subset \PP^n$ the determinantal scheme defined by the $2 \times 2$ minors of $N$:
$$
I(W)= I_2 (N).
$$
By construction we have that $W$ is $0$-dimensional and
$
Z \subseteq W
$.
As ${\rm deg} \   (Z)={\rm deg} \   (W)$, the equality $Z=W$ holds.

So $I(Z) = I_2(M)=I_2(N)$, but this is a contradiction, since any defining matrix for an eigenscheme has the property that the linear entries of the first row are linearly independent (see, for instance, \cite[Corollary 2.9]{BGV2022}).

\end{proof}

Taking into account all the necessary conditions that we have proved so far, we can state the converse result for points in $\PP^3$.

\begin{theorem}\label{converse}
Let $Z \subset \PP^3$ be a reduced $0$ - dimensional subscheme of degree
$$
{\rm deg} \  (Z) = \frac {(d-1)^4 -1}{ d-2}= d(d^2-2d+2)
$$
for some $d \ge 3$.

If $Z$ is a complete intersection of two irreducible aCM curves $C_0$ and $C_1$ on a smooth general surface $S$ of degree $d$ containing a line $L$, and the following conditions are satisfied:
\begin{enumerate}\label{assumptions}
\item no $(d-1)(d^2-d+1)$ points of $Z$ lie on a surface of degree $d-1$;
    \item $C_0 \sim C_1$;
    \item ${\rm deg} \     C_0 = {\rm deg} \  C_1 = \frac {(d-1)^3 -1}{ d-2} =d^2-d+1$;
    \item $g(C_0)=g(C_1)= d^3 - 7 \frac {d(d-1)}{ 2} -1$,
\end{enumerate}
then $Z$ is the eigenscheme of some partially symmetric tensor of order $d$.
\end{theorem}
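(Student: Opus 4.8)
My plan is to reconstruct, from the geometric data, a $2\times 4$ matrix in the eigenscheme form \eqref{def_matrix} whose ideal of $2\times 2$ minors is $I(Z)$. Throughout I would choose homogeneous coordinates so that the line is $L\colon x_2=x_3=0$ and write the equation of $S$ as $F=x_2g_3-x_3g_2$ with $g_2,g_3$ of degree $d-1$; this is possible because $F\in (x_2,x_3)=I(L)$.

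First I would pin down the divisor class of the $C_i$. By \cite{L}, for a general smooth surface $S$ of degree $d$ containing a line we have $\operatorname{Pic}(S)\cong\ZZ H\oplus\ZZ L$, with $H^2=d$, $H\cdot L=1$ and, via adjunction together with $K_S\sim (d-4)H$, also $L^2=2-d$. Writing $C_i\sim a_iH+b_iL$, condition (3) gives the linear relation $a_id+b_i=d^2-d+1$, while the adjunction formula and the genus condition (4) give a second, quadratic, relation; a direct computation shows that the only solution for which $C_i$ can be an irreducible curve is $a_i=d-1,\ b_i=1$, i.e.\ the class $(d-1)H+L$ already appearing in Theorem \ref{geo_descr}. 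Condition (2) then guarantees that $C_0$ and $C_1$ are two members of the \emph{same} linear system $|(d-1)H+L|$, and indeed $((d-1)H+L)^2=d(d^2-2d+2)=\deg Z$, consistently with $Z=C_0\cap C_1$.

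The key step, which I expect to be the main obstacle, is to show that each $C_i$ has a Hilbert--Burch matrix of eigenscheme type sharing the two columns that encode $L$ and $S$. Each $C_i$ is an aCM curve of codimension two, so by Hilbert--Burch its saturated ideal is $I_2$ of a $2\times 3$ homogeneous matrix. Because $C_i\subset S$ and $C_i\sim L+(d-1)H$, the isomorphism $\sI_{C_i/S}\cong \sI_{L/S}((d-1)H)$ of \cite{G} realizes $C_i$ as a Gaeta (basic double link) modification of $L$ on $S$: the irreducible general member of $|(d-1)H+L|$ is a deformation of the reducible curve $L\cup (S\cap V(h_i))$, whose ideal is $I_2\bigl(\begin{smallmatrix} x_2 & x_3 & 0\\ g_2 & g_3 & h_i\end{smallmatrix}\bigr)$. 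Fixing the two columns recording $L$ and $S$ and deforming the degenerate column, I would argue that the Hilbert--Burch matrix of $C_i$ can be taken to be
$$
A_i=\begin{pmatrix}\ell_i & x_2 & x_3\\ h_i & g_2 & g_3\end{pmatrix},
$$
with $\ell_i$ linear and $h_i$ of degree $d-1$, the distinguished minor $x_2g_3-x_3g_2=F$ recording $C_i\subset S$. The delicate point is to control the minimal free resolution of $\sI_{C_i}$ produced, via a Mapping cone, from those of $\sI_{L}$ and $\sI_S=(F)$, so that no extra generators survive and the shared columns are forced; this is where I anticipate the real work.

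Finally I would assemble the two matrices along their common last two columns into
$$
M=\begin{pmatrix}\ell_0 & \ell_1 & x_2 & x_3\\ h_0 & h_1 & g_2 & g_3\end{pmatrix}.
$$
Since $A_0$ and $A_1$ are submatrices of $M$, we have $I_2(M)\supseteq I(C_0)+I(C_1)$, hence $V(I_2(M))\subseteq C_0\cap C_1=Z$; conversely the $2\times 2$ minors of $M$ vanish on $Z$, so $V(I_2(M))=Z$ as sets. It then remains to prove that $\ell_0,\ell_1,x_2,x_3$ are linearly independent. If they were dependent, then after row and column operations $M$ would acquire a zero entry in its first row and, exactly as in the preceding Proposition, $Z$ would be forced to contain $(d-1)(d^2-d+1)$ points on the degree $d-1$ surface $V(h_i)$, contradicting hypothesis (1); this is precisely where that condition (and \cite[Corollary 2.9]{BGV2022}) enters. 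Hence the four linear forms are independent, and after a linear change of coordinates they become $x_0,x_1,x_2,x_3$, so that $M$ has the form \eqref{def_matrix}. Since $I_2(M)$ is then a saturated codimension three standard determinantal ideal whose Eagon--Northcott degree equals $d(d^2-2d+2)=\deg Z$, and $Z$ is reduced with $V(I_2(M))=Z$, I would conclude $I(Z)=I_2(M)=I(E(T))$ for the partially symmetric tensor $T=(g_0,g_1,g_2,g_3)=(h_0,h_1,g_2,g_3)\in (\mathrm{Sym}^{d-1}\CC^{4})^{\oplus 4}$, which is the assertion.
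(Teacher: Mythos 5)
Your overall architecture matches the paper's proof: pin down the class $C_i\sim(d-1)H+L$ from (2)--(4), realize each $C_i$ as $I_2$ of a $2\times 3$ matrix containing the two columns that encode $L$ and $S$, glue the two matrices into a $2\times 4$ matrix, and use hypothesis (1) to force the four linear entries to be independent. However, there are two genuine gaps. First, your appeal to \cite{L} for $\operatorname{Pic}(S)\cong\ZZ H\oplus\ZZ L$ is only valid for $d\ge 4$ (it is a Noether--Lefschetz type statement); for $d=3$ a smooth cubic surface has Picard rank $7$, so your ansatz $C_i\sim a_iH+b_iL$ is unjustified in exactly the case of the motivating question about $15$ eigenpoints. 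The paper treats $d=3$ separately, classifying the possible classes of an aCM curve of degree $7$ and genus $5$ on a cubic surface via \cite[Theorem 4.7]{PT} and then invoking hypothesis (2) to force $C_0\sim C_1\sim 2H+L'$ for a single line $L'$; without some such argument your proof does not cover $d=3$.

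Second, the step you yourself flag as ``the real work''--- that the Hilbert--Burch matrix of $C_i$ can be taken to share the columns $\bigl(\begin{smallmatrix}x_2\\ g_2\end{smallmatrix}\bigr)$, $\bigl(\begin{smallmatrix}x_3\\ g_3\end{smallmatrix}\bigr)$ --- is only sketched, and the route you propose (deforming the reducible curve $L\cup(S\cap V(h_i))$ inside $|(d-1)H+L|$) is not adequate as stated: a deformation argument controls a \emph{general} member of the linear system, whereas you must produce the matrix for the \emph{given} curves $C_0,C_1$. The paper does this directly: a mapping cone on the resolutions of $\sI_{S,\PP^3}$ and $\sI_{L,\PP^3}$ yields the resolution of $\sI_{L,S}$ by the $2\times 2$ matrix $\bigl(\begin{smallmatrix}x_2&g_2\\ x_3&g_3\end{smallmatrix}\bigr)$; twisting by $\sO_S(-(d-1)H)$ and applying the Horseshoe Lemma to $0\to\sI_{S,\PP^3}\to\sI_{C_i,\PP^3}\to\sI_{C_i,S}\to 0$ then forces the third column $(\ell_i,h_i)$ and hence the shared columns. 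You also omit the intermediate check that $x_2,x_3,\ell_i$ are independent for each fixed $i$ (the paper derives this from irreducibility of $C_i$ plus B\'ezout against hypothesis (3), not from hypothesis (1)); one of your minors, $\ell_0h_1-\ell_1h_0$, does not lie in $I(C_0)+I(C_1)$ a priori, so the identification $Z=V(I_2(M))$ should go through the inclusion $W\subseteq Z$ plus the Eagon--Northcott degree count rather than the unjustified claim that all minors of $M$ vanish on $Z$.
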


\begin{proof}
With no loss of generality we may assume that $L$ has equations
$$
L: \quad x_0 =x_1 =0.
$$
Since $L\subset S$, the equation of $S$ can be written in the form
$$
S: \quad x_0 g_1 - x_1 g_0=0,
$$
for some suitable degree $d-1$ forms $g_0$ and $g_1$. 

Our next goal is to show that both $C_0$ and $C_1$ are determinantal curves, whose equations are the $2 \times 2$ minors of a matrix given by adding a suitable column 
to
$$
\begin{pmatrix} x_0 & x_1 \\
g_0 & g_1 \\
\end{pmatrix}.
$$
We start by determining the divisor class of $C_i$ in $S$.

If $d \ge 4$, by \cite{L} we have that $Pic(S) \cong \ZZ^2 \cong \langle H, L\rangle$, where $H$ is a hyperplane divisor on $S$. So $C_i \sim \alpha H +\beta L$ for some integers $\alpha$ and $\beta$. By recalling $K_S \sim (d-4)H$, the relations
$$
C_i \cdot H = d^2-d+1, \quad 2g(C_i)-2= ((d-4)H+C_i)\cdot C_i
$$
yield the equations
$$
d\alpha + \beta =d^2-d+1, 
$$
$$
2d^3 - 7 d(d-1) -4= d\alpha^2 + (2-d)\beta^2 +2 \alpha \beta +(d-4)(\alpha d +\beta),
$$
 where we used the fact that $L^2 =2-d$ Indeed, $-2=2g(L)-2=(K_s+L)\cdot L=((d-4)H+L)\cdot L=d-4+L^2$). The two possible solutions are
 \begin{equation}\label{first}
 \alpha = d-1,\quad \beta =1,
 \end{equation}
 and 
 \begin{equation}\label{second}
 \alpha =\frac {(d^2-d+2)} {d},\quad \beta =-1.
\end{equation}
 As for $d \ge 3$ the solution \eqref{second} is not integer, it holds \eqref{first}. Therefore, we have
 $$
 C_i \sim (d-1)H +L.
 $$
 If $d=3$, by \cite[Theorem 4.7]{PT}, any aCM curve $C$ of degree $7$ and genus $5$ on a smooth cubic surface $S$ has class either
 $$
 C \sim e_i + 2H, \quad {\rm or} \ C \sim (l-e_j - e_k) +2H, \quad {\rm or} \ C \sim
 2l - e_{i_1}-e_{i_2}-e_{i_3}-e_{i_4}-e_{i_5}+2H,
 $$
 where the $e_i$'s are the exceptional divisors of $S$, when we identify it with the blow up of $\PP^2$ in $6$ points, and $l$ is the pull back of a line of $\PP^2$. We observe that the divisors
 $$
 e_i, \ l-e_j - e_k, \ 2l - e_{i_1}-e_{i_2}-e_{i_3}-e_{i_4}-e_{i_5}
 $$
 give rise to the $27$ lines of a smooth cubic surface, and by the assumption $C_0 \sim C_1$ we finally get that
 $$
 C_0 \sim C_1 \sim L + 2H,
 $$
 for some line $L \subset S$.
 
 In particular, in all cases we get
\begin{equation}\label{ideal}
 \sI_{C_i, S}= \sO_S(-C_i)=\sO_S (-(d-1)H -L)= \sI_{L,S}(-(d-1)H),
\end{equation}
 so we are led to determine a resolution of $\sI_{L,S}(-(d-1)H)$.
 
 By applying the Mapping Cone process (see, for instance, \cite[Section 1.5]{W}) to the two exact sequences
 $$
 \begin{array}{r}
 0 \to \sO_{\PP^3}(-d) \to \sI_{S,\PP^3} \to 0\\
 \\
 0 \to \sO_{\PP^3}(-2) \to 2\sO_{\PP^3}(-1) \to \sI_{L,\PP^3} \to 0\\
 \end{array}
 $$
 we get the resolution
 $$
 0 \to \sO_{\PP^3}(-2)\oplus \sO_{\PP^3}(-d) 
 \xrightarrow
{\left(
\begin{array}{cc}
x_0 & g_0\\
x_1& g_1\\
\end{array}
\right)
}
2\sO_{\PP^3}(-1)  \to \sI_{L,S} \to 0.
 $$
By \eqref{ideal} we have 
$
\sI_{C_i, S}\cong  \sI_{L,S}(-(d-1)H)
$
and we get, as a consequence, the resolution of $\sI_{C_i,S}$:
$$
0 \to \sO_{\PP^3}(-d-1)\oplus \sO_{\PP^3}(-2d+1) \to 2\sO_{\PP^3}(-d)  \to \sI_{C_i,S} \to 0.
$$
 Finally, we apply the Horseshoe Lemma to the diagram
 $$
 \begin{array}{rcl} 
 & 0 &  \\
 & \downarrow &  \\
 0 \to \sO_{\PP^3}(-d) \to & \sI_{S,\PP^3} & \to 0 \\
& \downarrow &  \\
&  \sI_{C_i,\PP3}&\\
& \downarrow &  \\
0 \to \sO_{\PP^3}(-d-1)\oplus \sO_{\PP^3}(-2d+1) \to 2\sO_{\PP^3}(-d)  \to & \sI_{C_i,S} & \to 0 \\
& \downarrow &  \\
& 0 &  \\
\end{array}
 $$
 and we obtain
 $$
 0 \to \sO_{\PP^3}(-d-1) \oplus \sO_{\PP^3}(-2d+1)
 \xrightarrow
{M_i:=\left(
\begin{array}{cc}
x_0 & g_0\\
x_1& g_1\\
l_i & f_i\\
\end{array}
\right)
}
3\sO_{\PP^3}(-d)  \to  \sI_{C_i,\PP^3}  \to 0,
 $$
 for suitable linear forms $l_i$ and suitable degree $d-1$ forms $f_i$. In particular, the saturated homogeneous ideal $I_{C_i,\PP^3}$ is generated in degree $d$ by the three $2\times 2$ minors of the matrix 
 above.
 
 Now we claim that the linear forms $x_0, x_1, l_i$ are linearly independent in both cases $i=0$ and $i=1$. Indeed, if this were not the case, by a base change we could transform the matrix $M_i$ in the matrix
 $$
 \left(
\begin{array}{cc}
x_0 & g_0\\
x_1& g_1\\
0 & h_i\\
\end{array}
\right),
 $$
 so the curve $C_i$ would be contained in the zero locus
 $$
 C_i \subset V(x_0 h_i, x_1h_i).
 $$
 Since $C_i$ is irreducible by assumption, we necessarily have
 $$
 C_i \subset V(h_i).
 $$
 As $C_i$ is also contained in the degree $d$ surface $S$,
by B\'ezout Theorem we would have
$$
{\rm deg} \  C_i \le (d-1)d,
$$
 and this contradicts the assumption (iii).

 Finally, we set
 \begin{equation}\label{matrix_oftwocurves}
 M:=\left(
\begin{array}{cc}
l_0 & f_0\\
x_0 & g_0\\
x_1& g_1\\
l_1 & f_1\\
\end{array}
\right),
\end{equation}
and set $W \subset \PP^3$ to be the determinantal scheme defined by the $2 \times 2$ minors of $M$:
$$
I(W)= I_2 (M).
$$
Since $Z$ is the complete intersection of the two determinantal curves $C_0$ and $C_1$, we have
$$
Z \subseteq W.
$$
As ${\rm deg} \   (Z)={\rm deg} \   (W)$, the equality $Z=W$ holds.

Finally, we claim that the four linear forms
 $$
 x_0, x_1, l_0, l_1
 $$
 are linearly independent.

 Assume by contradiction that
 this is not the case; then possibly performing a base change, we have that $Z$ is the degeneracy locus also of the matrix
 \begin{equation}\label{wrongmatrix_oftwocurves}
 M':=\left(
\begin{array}{cc}
l_0 & f_0\\
x_0 & g_0\\
x_1& g_1\\
0 & h_1\\
\end{array}
\right),
\end{equation}
 so we would have $Z \supseteq V(l_0 h_1, x_0h_1, x_1 h_1)$, and $Z$ would contain $(d-1)(d^2-d+1)$ points on the degree $d-1$ surface $ V(h_1)$, contradicting assumption (1).

To conclude, we observe that possibly performing a base change we may assume that $M$ is of the type
$$
\left(
\begin{array}{cc}
x_0 & g_0\\
x_1& g_1\\
x_2 & g_2\\
x_3 & g_3
\end{array}
\right),
$$
so $Z$ is indeed an eigenscheme.
\end{proof}

\begin{remark}
Theorem \ref{converse} gives a geometric criterion for a set of points $Z\subset \PP^3$ to be an eigenscheme of some partially symmetric tensor $T$, and our construction furnishes explicitly a set of generators for its ideal. Then by applying the criterion given in
\cite[Theorem 3.4, and Algorithm 2]{BGV2022}
it is possible to determine whether $Z$ is the eigenscheme of a symmetric tensor.
\end{remark}
We conclude this note by proving that any general set of points in $\PP^3$ can be enlarged to an eigenscheme of a partially symmetric tensor.

\begin{proposition}
Let $W \subset \PP^3$ be a set of $k$ general points and set $d\ge 3$ to be an integer such that
\begin{equation}\label{degw}
{\rm deg} \  W =k \le \binom {d-1}{3} + 3 \binom {d }{ 2} +1.
\end{equation}
Then $W$ can be embedded into an eigenscheme $Z=E(T)$ of a partially symmetric tensor of order $d$.
\end{proposition}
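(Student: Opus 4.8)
The plan is to realize $W$ as a subscheme of the complete intersection of two curves on a surface of the type appearing in Theorem~\ref{converse}, and then to apply that theorem. The computation rests on the arithmetic identity
$$
\binom{d-1}{3}+3\binom{d}{2}+1=\binom{d+2}{3}=h^0\big(\PP^3,\sO_{\PP^3}(d-1)\big),
$$
so that hypothesis \eqref{degw} reads $k\le h^0(\PP^3,\sO_{\PP^3}(d-1))$; in particular the $k$ general points of $W$ impose independent conditions on forms of degree $d-1$. First I would fix the line $L:\ x_0=x_1=0$ and choose a smooth surface $S\subset\PP^3$ of degree $d$ containing $L$ and passing through $W$. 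The surfaces containing $L$ are exactly those with equation $x_0g_1-x_1g_0=0$ for forms $g_0,g_1$ of degree $d-1$; this is a very large system, so imposing passage through the $k$ points of $W$ is a condition of codimension at most $k$, and for $d\ge 4$ one can still keep $S$ general enough that ${\rm Pic}(S)\cong\ZZ^2\cong\langle H,L\rangle$ by \cite{L} (for $d=3$ every smooth cubic surface works), where $H$ denotes the hyperplane class of $S$.

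Next I would study the divisor class $C:=(d-1)H+L$ on $S$, which is exactly the class of the curves in Theorems~\ref{geo_descr} and~\ref{converse}. From the exact sequence
$$
0\to\sO_S((d-1)H)\to\sO_S(C)\to\sO_L(C|_L)\to 0,
$$
the degree computation $C\cdot L=(d-1)(H\cdot L)+L^2=(d-1)+(2-d)=1$, the vanishing $H^1(\sO_S((d-1)H))=0$ (as $S$ is aCM), and $h^0(\sO_S((d-1)H))=h^0(\PP^3,\sO_{\PP^3}(d-1))=\binom{d+2}{3}$, I would obtain
$$
h^0\big(S,\sO_S(C)\big)=\binom{d+2}{3}+2.
$$
Since $W$ imposes at most $k\le\binom{d+2}{3}$ conditions, the curves of $|C|$ passing through $W$ form a linear system of projective dimension at least $1$, that is, at least a pencil.

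Among these, the curves containing $L$ as a component form the linear subspace $H^0(\sI_{W,S}((d-1)H))\subseteq H^0(\sO_S(C))$, of dimension $\binom{d+2}{3}-k$ (using that $W$ imposes independent conditions on forms of degree $d-1$). Comparing with the count above, the general curve of the pencil through $W$ does not contain $L$ and, for $C$ sufficiently positive, is irreducible and aCM by Bertini. Choosing two general members $C_0,C_1$ of this pencil I would set
$$
Z:=C_0\cap C_1\ \supseteq\ W,
$$
a reduced $0$-dimensional complete intersection on $S$ of degree $C^2=\frac{(d-1)^4-1}{d-2}=d(d^2-2d+2)$. Conditions (2), (3) and (4) of Theorem~\ref{converse} then hold automatically, since $C_0\sim C_1\sim C$ and both degree and genus are fixed by the class $C$, while condition (1) holds by genericity; Theorem~\ref{converse} therefore produces a partially symmetric tensor $T$ of order $d$ with $Z=E(T)$, whence $W\subseteq E(T)$.

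The main obstacle is the genericity bookkeeping underlying the last two steps. Because $S$ is chosen so as to contain $W$, one must verify that $S$ can \emph{simultaneously} be taken general in the Noether--Lefschetz sense (so that ${\rm Pic}(S)=\langle H,L\rangle$ and the class computation is legitimate), that the general member of the pencil through $W$ is genuinely irreducible and meets a second general member transversally (so that $Z$ is reduced of the expected length), and that no surface of degree $d-1$ contains $(d-1)(d^2-d+1)$ points of $Z$. Each of these amounts to checking that imposing the $k\le\binom{d+2}{3}$ general conditions coming from $W$ does not push $S$ or the pencil into the corresponding degenerate locus, which I expect to follow from dimension counts but is the delicate point of the argument.
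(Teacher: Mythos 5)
Your construction is the paper's: the identity $\binom{d-1}{3}+3\binom{d}{2}+1=\binom{d+2}{3}$ is a nice way to package the bound, your computation $h^0(\sO_S((d-1)H+L))=\binom{d+2}{3}+2$ via restriction to $L$ agrees with the paper's Riemann--Roch plus Kodaira vanishing count of $3\binom{d}{2}+3+\binom{d-1}{3}$, and the pencil of curves in $|(d-1)H+L|$ through $W$ with base locus $Z\supseteq W$ is exactly the object the paper builds. The dimension count showing that the sections vanishing on $L\cup W$ form a codimension $\ge 2$ subspace of those vanishing on $W$, so that no member of a general pencil contains $L$, is also the right observation.

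The gap is in your final step, where you invoke Theorem \ref{converse} as a black box. That theorem requires (i) $C_0$ and $C_1$ irreducible aCM curves and (ii) hypothesis (1), that no $(d-1)(d^2-d+1)$ points of $Z$ lie on a surface of degree $d-1$. Neither can be dispatched as you propose. For (i), when $k$ is at the upper bound the system of curves in $|(d-1)H+L|$ through $W$ is only a pencil, and Bertini's irreducibility theorem does not apply to a pencil (its associated map has one-dimensional image); "for $C$ sufficiently positive" is not available since $C$ is fixed by $d$. For (ii), $Z$ is \emph{not} a general set of points: only the $k$ points of $W$ are general, while the remaining $\deg Z - k$ points are forced by the pencil, so "condition (1) holds by genericity" is unjustified and, as you note yourself, this is precisely where the argument is incomplete. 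The paper avoids both issues by not citing Theorem \ref{converse} but rerunning its mapping-cone argument directly on the pencil: the divisor class alone gives $\sI_{C_i,S}\cong\sI_{L,S}(-(d-1)H)$ and hence the $3\times 2$ presentation matrices; the linear independence of $x_0,x_1,l_i$ follows from the fact that no member of $\Lambda$ contains $L$ together with B\'ezout (no irreducibility needed, since $C_i\subset V(h_i)\cup L$ and no component equals $L$ forces $C_i\subset V(h_i)$); and the degenerate $4\times 2$ matrix \eqref{wrongmatrix_oftwocurves} is excluded not via condition (1) but because it would produce a member $V(x_0g_1-x_1g_0,\,x_0h_1,\,x_1h_1)$ of $\Lambda$ containing $L$, contradicting the choice of pencil. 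To repair your write-up, replace the appeal to Theorem \ref{converse} by this direct argument, for which your dimension count already supplies the needed input.
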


\begin{proof}
We observe that if $W$ is general enough, there always exists a smooth surface $S \subset \PP^3$ of degree $d$, containing a line $L$ and $W$. Indeed, we have
$h^0 (\sO_{\PP^3} (d))= \binom {d+3} {3}$, and the condition of containing a fixed line $L$ imposes $d+1$ linear conditions. By \eqref{degw} we have
$$
\binom {d+3 } {3} - (d+1) - {\rm deg} \  W \ge \frac {d(d+1)}{ 2},
$$
so the claim follows.

Without loss of generality we may assume that $L$ has equations $x_0=x_1=0$, so that $S$ is given by
$$
S : \ \ x_0 g_1 - x_1 g_0 =0,
$$
for some degree $d-1$ polynomials $g_0$ and $g_1$.

By \cite{L} the Picard group of such a surface
is
$$
{\rm Pic } (S) \cong \ZZ^2 \cong \langle H, L\rangle,
$$
where $H$ is a hyperplane divisor. Consider now the linear system
$|(d-1) H +L|$.
By Riemann Roch we have
$$
\chi (\sO_S((d-1)H+L)= \frac {1}{ 2} ((d-1)H+L) \cdot 
((d-1)H+L-K_S))+1 + p_a(S);
$$
recalling that $K_S \sim (d-4)H$ and $p_a(S)= \binom {d-1} {3}$, we obtain
$$
\chi (\sO_S((d-1)H+L)= \frac {1}{ 2} ((d-1)H+L) \cdot 
(3H+L)+1 + \binom {d-1 } {3}=3 \binom {d } {2} +3 +\binom {d-1 } { 3}.
$$
Moreover, since 
$$
(d-1)H+L \sim K_S +3 H +L
$$
and $3H+L$ is an ample divisor, by the Kodaira Vanishing Theorem we have \\ $h^1(\sO_S((d-1)H+L)=h^2(\sO_S((d-1)H+L)=0$,
so that
$$
\chi(\sO_S((d-1)H+L)=h^0(\sO_S((d-1)H+L).
$$

By the assumption \eqref{degw} there exists a pencil 
$\Lambda= \langle C_0, C_1 \rangle$ of curves in $|(d-1) H +L|$ containing $W$. Furthermore, by the generality of $W$ we may assume that no curve in $\Lambda$ has $L$ as an irreducible component, and that the base locus $Z$ of $\Lambda$ is zero - dimensional.

Then we have the isomorphisms given in \eqref{ideal} and we may apply the same argument of the Theorem \ref{converse} to show that the complete intersection subscheme $C_0 \cap C_1$ is the degeneracy locus of a matrix of the type
$\eqref{matrix_oftwocurves}$,
where the triples of linear forms $x_0, x_1, l_0$ and $x_0, x_1, l_1$ are linearly independent. We claim that also 
$x_0,x_1,l_0,l_1$ are linearly independent.

Assume by contradiction that
 this is not the case; then possibly performing a base change, we have that $Z$ is the degeneracy locus also of the matrix
 \eqref{wrongmatrix_oftwocurves},
and we see that there will be a curve $C$ in $\Lambda$ with equations $V(x_0 g_1- x_1 g_0, x_0 h_1, x_1 h_1)$. Since 
$C \supset L$, this contradicts our choice of $\Lambda$.

Hence we finally get that $Z$ is the degeneracy locus of a matrix
$$
\left(
\begin{array}{cc}
x_2 & g_2\\
x_0 & g_0\\
x_1& g_1\\
x_3 & g_3\\
\end{array}
\right),
$$
so it is an eigenscheme and $Z \supset W$ by construction.

\end{proof}


\begin{thebibliography}{ll}

\bibitem{Abo} {H. Abo},
	{\em On the discriminant locus of a rank $n-1$ vector bundle on
	$\PP^{n-1}$},
	{Portugaliae Mathematica},
	{\bf 77}, n. 3-4
	(2020), {299--343}.
	
	
	\bibitem{ASS} {H. Abo, A. Seigal and B. Sturmfels},
     {\em Eigenconfigurations of Tensors},
 {Algebraic and Geometric Methods in Discrete Mathematics, Contemporary Mathematics 685},
AMS, Providence, RI,
      (2017), 1--25.
	
\bibitem{BGV} V. Beorchia, F. Galuppi and L. Venturello, {\em Eigenscheme of Ternary Tensors}, SIAM J. Appl. Algebra Geometry {\bf 5} (2021), 620 - 650.

 \bibitem{BGV2022} V. Beorchia, F. Galuppi and L. Venturello, {\em Equations of tensor eigenschemes}, preprint arXiv:2205.04413 [math.AG]
  (2022), {https://arxiv.org/abs/2205.04413}.
 
  	 
\bibitem {CGKS}
  {T. \"{O}. {\c{C}}elik, F. Galuppi, A. Kulkarni, M.-\c{S}. Sorea
              },
 { \em On the eigenpoints of cubic surfaces},
    {Le Matematiche},
    {\bf 75}, n. 2, (2020),
     {611--625}.

\bibitem{CS}
D. Cartwright, B. Sturmfels,
      {\em The number of eigenvalues of a tensor},
    {Linear Algebra and its Applications},
     {\bf 438}, n. 2
      (2013),
    {942--952}.

\bibitem{E} D. Eisenbud, {\em Commutative Algebra.With a view toward algebraic geometry}, Springer-Verlag, Graduate
Texts in Mathematics {\bf 150} (1995).


     
\bibitem{G} E. Gorla, {\em A generalized Gaeta?s theorem},
Compositio Mathematica {\bf 144} (2008), 689 - 704.


\bibitem{HE}
    M. Hochster, John A. Eagon, {\em Cohen-{M}acaulay rings, invariant theory, and the generic
              perfection of determinantal loci},
  {Amer. J. Math.},
  {\bf 93},
      (1971),
     {1020--1058}.
   
   
\bibitem {KMR} J. O. Kleppe, R. M. Mir\'{o}-Roig, 
    {\em The representation type of determinantal varieties},
   Algebr. Represent. Theory, {\bf 20, 4},
      (2017), 1029 - 1059.
    
    \bibitem {Lim}
    L. H. Lim,{\em Singular values and eigenvalues of tensors: a variational approach}, {1st IEEE International Workshop on Computational Advances in Multi-Sensor Adaptive Processing}, 
  (2005),
  129--132.
  
 \bibitem {L} A. F. Lopez, {\em On the curves lying on a general surface containing a fixed
              space curve},
Ricerche Mat.,
 {\bf 41, 1},
(1992),21-40.    
		
		
\bibitem{PT} J. Pons-llopis, F. Tonini, {\em ACM bundles on Del Pezzo surfaces},
Le Matematiche {\bf  LXIV} (2009), 177?211.


\bibitem {Qi}
L. Qi, {\em Eigenvalues of a real supersymmetric tensor}, {Journal of Symbolic Computation},
    {\bf 40}, n. 6
      (2005),
    1302--1324.

\bibitem {QCC}
    L. Qi, H. Chen, Y. Chen,
     {\em Tensor eigenvalues and their applications},
   {Advances in Mechanics and Mathematics},
   {\bf 39} (2018),
 {Springer, Singapore}.
    
      
      
      \bibitem{RS}
     {K. Ranestad, B. Sturmfels},
     {\em Twenty-seven questions about the cubic surface},
   {Le Matematiche},
   {\bf 75}, n. 2, (2020)
     {411--424}.

\bibitem{W}
 Ch. Weibel, {\em An introduction to homological algebra}, Cambridge Studies in Advanced Mathematics {\bf 38} (1994) Cambridge University Press.
\end{thebibliography}
\end{document}